\theoremstyle{plain}% default
\newtheorem{proposition}{Proposition}
\theoremstyle{definition}
\theoremstyle{remark}
\newtheorem{remark}{Remark}%[section]
\def\R{\ensuremath{\mathbb R}}
\def\N{\ensuremath{\mathbb N}}
\def\e{{\ensuremath{\rm e}}}
\def\L{\ensuremath{\mathcal L}}
\def\p{\ensuremath{\mathbb P}}
\def\t{\ensuremath{t}}
\def\eps{\varepsilon}
\numberwithin{equation}{section}
\begin{document}

\title{Extreme Value laws for dynamical systems under  observational noise }% Force line breaks with \\
%\thanks{A footnote to the article title}%

\author[D. Faranda]{Davide Faranda}
\address{Laboratoire SPHYNX, Service de Physique de l'Etat Condens\'e, DSM,
CEA Saclay, CNRS URA 2464, 91191 Gif-sur-Yvette, France
}
\email{davide.faranda@cea.fr}

\author{ Sandro Vaienti}
\address{UMR-7332 Centre de Physique Th\'{e}orique, CNRS, Universit\'{e}
d'Aix-Marseille I, II, Universit\'{e} du Sud, Toulon-Var and FRUMAM,
F\'{e}d\'{e}ration de Recherche des Unit\'{e}s des Math\'{e}matiques de Marseille,
CPT Luminy, Case 907, F-13288 Marseille CEDEX 9}
\email {vaienti@cpt.univ-mrs.fr}

%\collaboration{CLEO Collaboration}%\noaffiliation

%\date{\today}% It is always \today, today,
             %  but any date may be explicitly specified

\begin{abstract}
In this paper we prove the existence of Extreme Value Laws for  dynamical systems perturbed by instrument-like-error, also called observational noise. An orbit perturbed with observational noise  mimics the behavior of  an instrumentally recorded  time series. Instrument characteristics  - defined as precision and accuracy -  act both by truncating  and randomly displacing the real value of a measured observable.  Here we analyze both these effects from a theoretical and numerical point of view. First we show that classical extreme value laws can be found for orbits of dynamical systems perturbed with   observational noise.  Then we present  numerical experiments to support the theoretical findings and give an indication of the order of magnitude of the instrumental perturbations which  cause relevant deviations from the extreme value laws observed in deterministic dynamical systems. Finally, we show that the  observational noise preserves the structure of the deterministic attractor. This goes against the  common assumption that random transformations cause the orbits asymptotically fill the ambient space with a loss of information about any   fractal structures present on the attractor.

\end{abstract}

                              %display desired
\maketitle

%\tableofcontents

\section{Introduction}

\label{dad}
In two previous works \cite{22, 30}, we investigated the persistence of Extreme Value Laws (EVLs) whenever a dynamical system is perturbed throughout random transformations. We  considered  an i.i.d.  stochastic process $(\omega_k)_{k\in N}$ with values in the measurable space $Q_{\varepsilon}$ and with probability distribution $\theta_{\varepsilon}$. After associating to each $\omega\in Q_{\varepsilon}$ a map $T_{\omega}$ acting on the measurable space  $\Omega$ into itself,    we considered the random orbit starting from the point $x$ and generated by the realization $\underline{\omega}_n=(\omega_1,\omega_2,\cdots,\omega_n)$: $$T_{\underline{\omega}_n}:=T_{\omega_n}\circ\cdots\circ T_{\omega_1}(x).$$  Here, the transformations $T_{\omega}$ should be considered  close to each other and the suitably rescaled scalar parameter $\varepsilon$ is the strength of such a distance.  We could therefore  define a Markov process  $\mathcal X_{\varepsilon}$ on $\Omega$ with transition function
\begin{equation}\label{gre}
 P (x, A)=\int_{Q_{\varepsilon}}\mathbf{1}_{A}(T_{\omega}(x))d\theta_{\varepsilon}(\omega),
 \end{equation}
where $A\in\Omega$ is a measurable set,  $x\in \Omega$ and $\mathbf{1}_{A}$ is the indicator function of a set $A$. A probability measures $\mu_{\varepsilon}$ is called a \textit{stationary measure} if for any measurable $A$ we have:
$$
\mu_{\varepsilon}(A)=\int_{\Omega} P_{\varepsilon}(x,A)d\mu_{\varepsilon}(x)
.$$
We call it an absolutely continuous stationary measure ({\em acsm}), if it has a density with respect to the Lebesgue measure whenever $\Omega$ is a metric space. \\

In this work we consider a different type of perturbation, the   observational noise, which consists in replacing the orbit of the point $x\in \Omega$ at time $i$, namely $T^ix,$ with $T^ix +\omega_i.$ There are several physical motivations to investigate the behavior of this kind of perturbation. In fact, as Lalley and Noble wrote in \cite{LN}:

{\em  "...In this model our observations take the
form $y_i = T^ix+ \omega_i$, where $\omega_i$ are independent, mean zero random vectors. In
contrast with the dynamical noise model (e.g; the  random transformations), the noise does not interact with the
dynamics: the deterministic character of the system, and its long range dependence,
are preserved beneath the noise. Due in part to this dependence, estimation in the
 observational noise  model has not been broadly addressed by statisticians, though
the model captures important features of many experimental situations."}

Judd \cite{J}, quoted in \cite{SD}, also pointed out that:

{\em "...the reality is that many physical
systems are indistinguishable from deterministic systems, there is no apparent
small dynamic noise, and what is often attributed as such is in fact model error."}

Moreover, a system contaminated by the  observational noise  raises the natural  and practical  question whether it would be possible to recover the original signal, in our case the deterministic orbit $\{T^ix\}_{i\ge 1}$. In the last years a few techniques have been proposed for such a  noise reduction \cite{KS}: we remind here   the remarkable Schreiber-Lalley method \cite{LL1, LL2,  SS1, SS2}, which provides a very consistent algorithm  to perform the noise reduction when the underlying deterministic dynamical system has strong hyperbolic properties. Another interesting work shows that in the computation of some statistical quantities, the dynamical noise corresponding to random transformations could be considered as an  observational noise  with the Cauchy distribution  \cite{SMR}. Finally, the paper \cite{34} proves concentration inequalities for systems perturbed by  observational noise.\\

The present work try to re-frame the previous findings in terms of extreme value theory (EVT) by adding a further motivation driven by the applicability of the whole  EVT  for dynamical systems to experimental data. It should be a general praxis to check the role of instrument-like-perturbations before applying dynamical systems techniques to experimental datasets. In this sense, the dynamical systems considered in this paper share several properties with observed time series, as the observational noise acts exactly as a physical instrument.   The goal is to exploit the recent  advancements of the  EVT for dynamical systems to define in a more rigorous way the extremes of time series. A   relevant example of a successful application of the theory presented in this paper to experimental datasets is given in \cite{31b}, where temperature data are analyzed with the algorithmic procedure   presented in Section 4.2.  More specifically,  our interest is to understand which way the results obtained on deterministic dynamical systems are  altered by the addition of observational noise and in which cases one can recover   classical EVLs.  We start the discussion by summarizing the main findings of the EVT for dynamical systems.\\

The first rigorous  mathematical approach to EVT in dynamical systems goes back to the pioneer paper by Collet \cite{12}. Important contributions have successively been given in  \cite{13}, \cite{14}, \cite{15} and in \cite{16}. Here we briefly  recall the main findings  deferring to the previous papers for the full demonstrations.\\

  Let us consider a dynamical system $(\Omega, {\mathcal B}, \nu, T)$, where $\Omega$ is the
    invariant set in some manifold, usually $\mathbb{R}^d$, ${\mathcal B}$ is the Borel $\sigma$-algebra, $T:\Omega\rightarrow \Omega$ is a measurable map
      and $\nu$ a  probability $T$-invariant Borel measure.\\
In order to adapt the EVT to dynamical systems, we follow \cite{13}, by considering the stationary stochastic process $X_0,X_1,...$  given by:

\begin{equation}
X_m(x)=w (\mbox{dist}(T^m x, z)) \qquad \forall m \in \mathbb{N},
\label{sss}
\end{equation}

where 'dist' is a distance on the ambient space  $\Omega$, $z$ is a given point and $w$ is a suitable function which will be specified later.  This particular functional form has been introduced first by Collet \cite{12} and allows for a direct connection between recurrence properties around a point of the phase space $z$ and the existence of EVLs.  The object of interest is the distribution of $\mathbb{P}(M_m\le u_m)$, where $M_m:=\max\{ X_0, \cdots, X_{m-1}\}$; we say that we have an EVL for $M_m$ if there is a non-degenerate distribution function \ $H:\R\to[0,1]$ with $H(0)=0$ and,  for every $\tau>0$, there exists a sequence of levels $u_m=u_m(\tau)$, $m=1,2,\ldots$,  such that
\begin{equation}
\label{eq:un}
  m\,\p(X_0>u_m)\to \tau,\;\mbox{ as $m\to\infty$,}
\end{equation}
and for which the following limit holds:
$$
\p(M_m \le u_m)\rightarrow 1-H(\tau), \mbox{as} \ m\rightarrow \infty
$$

The motivation for using a normalizing sequence $u_m$ satisfying \eqref{eq:un} comes from the case when $X_0, X_1,\ldots$ are independent and identically distributed (i.i.d). In this setting, it is clear that $\p(M_m\leq u)= (F(u))^m$, being $F(u)$ the cumulative distribution function for the variable $u$. Hence, condition \eqref{eq:un} implies that
\[
\p(M_m\leq u_m)= (1-\p(X_0>u_m))^m\sim\left(1-\frac\tau m\right)^m\to\e^{-\tau},
\]
as $m\to\infty$.  Note that in this case $H(\tau)=1-\e^{-\tau}$ is the standard exponential distribution function. By choosing  the sequence $u_m=u_m(y)$ as one parameter families like $u_m=y/a_m+b_m$, where $y\in \mathbb{R}$ and $a_m>0,$ for all $m\in \mathbb{N}$ and $w$ as above,   whenever the variables $X_i$ are i.i.d., if for some constants $a_m>0, \ b_m,$ we have $\mathbb{P}(a_m(M_m-b_m)\leq y)\rightarrow G(y)$.  When the convergence occurs at continuity points of $G$   ($G$ is non-degenerate) then $G_m$ converges to one of the three EVLs  rewritable in terms of the Generalized Extreme Value (GEV) distribution as:

\begin{equation}
G(y; \kappa)=\exp\left\{ [1+{\kappa}y]^{-1/{\kappa}}\right\}.
\label{cumul}
\end{equation}

Here ${\kappa} \in \mathbb{R}$ is the shape parameter also called the tail index: when ${\kappa} \to 0$, the distribution corresponds to a Gumbel EVL; when the tail index is positive, it corresponds to a Fr\'echet EVL; when ${\kappa}$ is negative, it corresponds to a Weibull EVL.  The EVL obtained depends on the kind of observable chosen. In particular, in \cite{12,13} the authors have shown that, once taken the observable:

\begin{equation}
w(y)= -\log(y),
\label{g1}
\end{equation}

one gets a Gumbel EVL, here $y=\mbox{dist}(T^m x, z)$. In the next section we  prove the existence of Gumbel law for the maps perturbed with observational noise. It is in fact possible to  introduce other observables  than the one specified above in order to get  convergence towards  Frechet and Weibull EVLs. However, for any  choice  different from $w(y)=-\log(y)$,  the tail index  depends  on the local property of the measure.
%We assume that $\varphi$ achieves a global maximum at $z\in\mathcal M$, for every $u<\varphi(z)$ but sufficiently close to $\varphi(z)$, the event $\{x\in\mathcal M:\; \varphi(x)>u\}=\{X_0>u\}$ corresponds to a topological ball ``centred'' at $z$ and,
For every sequence $(u_m)_{m\in\N}$ satisfying \eqref{eq:un} we define:
\begin{equation}
\label{def:Un}
U_m:=\{X_0>u_m\}.
\end{equation}
%is a nested sequence of sets such that
%\begin{equation}
%\label{def:zeta}
%\bigcap_{n\in\N} U_n=\{z\}.
%\end{equation}

When $X_0,X_1,X_2,\ldots$ are not independent, the standard exponential law still applies under some conditions on the dependence structure. These conditions are the following:

\textbf{Condition}[$D_2(u_m)$]\label{cond:D2} We say that $D_2(u_m)$ holds for the sequence $X_0,X_1,\ldots$ if for all $\ell,t$
and $m$,
\begin{equation}\label{D1}
|\p\left(X_0>u_m\cap
  \max\{X_{t},\ldots,X_{t+\ell-1}\leq u_m\}\right)-
 \p(X_0>u_m)
  \p(M_{\ell}\leq u_m)|\leq \gamma(m,t),
\end{equation}
where $\gamma(m,t)$ is decreasing in $t$ for each $m$ and
$m\gamma(m,t_m)\to0$ when $m\rightarrow\infty$ for some sequence
$t_m=o(m)$.

Now, let $(k_m)_{m\in\N}$ be a sequence of integers such that
\begin{equation}
\label{eq:kn-sequence-1}
k_m\to\infty\quad \mbox{and}\quad  k_m t_m = o(m).
\end{equation}

\textbf{Condition}[$D'(u_m)$]\label{cond:D'} We say that $D'(u_m)$
holds for the sequence $X_0, X_1, X_2, \ldots$ if there exists a sequence $(k_m)_{m\in\N}$ satisfying \eqref{eq:kn-sequence-1} and such that
\begin{equation}
\label{eq:D'un}
\lim_{m\rightarrow\infty}\,m\sum_{j=1}^{\lfloor m/k_m \rfloor}\p( X_0>u_m,X_j>u_m)=0.
\end{equation}

By following Freitas and Freitas \cite{FF}--Theorem 1, if conditions $D_2(u_m)$ and $D'(u_m)$ hold for  $X_0,X_1,X_2,\ldots,$ then there exists an EVL for $M_m$ and $H(\tau)=1-e^{-\tau}.$ \\ In the paper \cite{13}, Freitas, Freitas and Todd made the interesting observation that the extreme value laws are intimately related to the concept of local recurrence, in particular to the first hitting  time function in small sets. Their analysis has been brought to systems perturbed with random transformations in \cite{22}. In Section 3 we will show that these kind of results  hold also  for systems perturbed with observational noise, first by adapting the definition of first hitting time  and then by showing that it follows an exponential law tempered by the strength of the perturbation.\\

We remark that the analogy between extreme value laws and local recurrences is  possible for particular observables of the type $X_0(\cdot)=w(\mbox{dist}(\cdot,z)),$ where $z$ is a given point. As explained in \cite{FF}, the additional choice $w(y)=-\log(y),$ allows to get the Gumbel law (a direct proof of this fact is given after Proposition \ref{P2}), and moreover it brings information on the local structure of the invariant measure, as we will explain in a moment. This represents the main motivation for using   such    observable although a more detailed discussion and other motivations can be found in    \cite{30, 31a}. \\

We conclude this introduction  by stressing what we believe is an interesting and very general result. We will see that the probability $\mathbb{P}$ which we will use to rule out the distribution of the maxima is the product between the invariant measure $\nu$ and the measure of the noise $\theta$. Whenever one is able to prove the existence of an extreme value law for the process $X_m$ with the observable (\ref{g1}), then Proposition \ref{P2} shows how the two sequences $a_m$ and $b_m$ appearing in the affine choice for $u_m$ (see above), are related to the local behavior of the measure $\nu$ at a local scale given by the intensity of the noise. This local behavior is related to the fine structure of the measure $\nu$. We have therefore a useful tool to detect the fine geometric properties of the invariant measure by calibrating the normalizing sequence $u_m$ until we get the Gumbel law, and this at different scales for the noise.

\section{Recurrences for time series: a theoretical approach}

We now show how to adapt the EVT for orbits of dynamical systems   perturbed by instrument-like-error.

 Each instrument introduces an effect related  to the combined accuracy and precision of the measure  by replacing  the  real (unknown) value  with the biased  indicated by the instrument itself.
In general, if the real dynamics can be represented by the map $T$, what one observes is formally:

$$\varphi(i)=\operatorname{trunc}(T^ix+ \epsilon\xi_i, q),$$

where $\operatorname{trunc}(x,q) = \frac{\lfloor 10^q \cdot x \rfloor}{10^q}$ is the truncation introduced by the instrument precision and $\epsilon\xi_i$ is a random displacement from the real value. This displacement is what we have  defined as   observational noise. Here it is rewritten as  $T^ix+\epsilon \xi_i$, with  the parameter $\epsilon\in \mathbb{R}^+$.\\

Although the paper is dedicated to the  observational noise, for  completeness and in light of applications on time series, we want remark also some relevant properties of truncations.
The role of the truncation, important also for numerical computations, has been discussed in the book by Knuth \cite{17} and then analyzed, among others, by \cite{18,19,20}. On the $q$ digit, the truncation acts essentially as a random noise of variance $\sigma=10^{-q}$. If $q\gg 1$ the measure underlying   $\varphi(i)$ will match the one of the original dynamics given by the map $T$, if not the support will appear as a collection of Dirac's deltas, precluding the convergence to the GEV distribution  \cite{21}.  We defer to Section 4.1 for a numerical study on the truncation error  which should point out for which values of $q$ one should take truncations into account and whether $q \simeq 7$ (the common truncation corresponding to a double precision representation) is a good choice for representing the  properties of a deterministic dynamics.

We thus proceed to  show that   EVLs persist  for chaotic dynamical systems, whenever they are perturbed with the  observational noise \cite{33,34}.   The proof we give is reminiscent of that of Theorem D in  \cite{22}. We first point out that, in order to guarantee the stationarity of the random process involved in the distribution of the maxima, we need to evaluate the observable $w$ at the point $x+\xi$, where $x\in \Omega$ and $\xi$ is a random vector. For this reason and in order to avoid ambiguities, we will choose $\Omega$ as a torus. Another alternative would be to take $\Omega$ which is strictly sent into itself by $T$, $T\Omega\subset \Omega$, and with all the components of $\xi$ small enough (see, for instance, Proposition 4.5 in \cite{22}). This choice is often invoked for random transformation acting of bounded domains of $\mathbb{R}^n.$ The paragraph below contains the assumptions on the systems which allow us to prove our main results.\\

{\bf Assumption M}: We consider  maps $T$ defined  on the torus  $\Omega=\mathbb{T}^d$ with   norm $||\cdot||$ and satisfying:
 \begin{itemize}
 \item There exists a finite partition (mod-$0$) of $\Omega$ into open sets $Y_j, j=1,\cdots,p$, namely $\Omega=\cup_{j=1}^p \overline{Y_j}$, such that $T$ has a Lipschitz extension on the closure of each $Y_j$ with a uniform and strictly larger than $1$  Lipschitz constant $\eta$, $||T(x)-T(y)||\le \eta ||x-y||$, $\forall x,y \in \overline{Y_j}, j=1,\cdots,p.$
     \item  $T$ preserve a Borel probability measure $\nu$ which is also mixing with decay of correlations given by
\begin{equation}\label{DC}
\left|\int f\circ T^m h d\nu -\int f d\nu \int h d\nu \right|\le C ||h||_{\mathcal{B}}||f||_1 m^{-2}
\end{equation}
where the constant $C$ depends only on the map $T$, $||\cdot||_1$ denotes the $L^1_{\nu}$ norm with respect to $\nu$ and finally $\mathcal{B}$ is a Banach space included in $L^{\infty}_{\L},$ where $\L$ denotes the Lebesgue (Haar)  measure on $X$: the corresponding norm will be denoted with  $||\cdot||_{\infty}.$ We will also need  $\nu$  to  be equivalent to  $\L$ with density in $L^{\infty}_{\L}$.
\end{itemize}

{\bf Assumption N}: We consider a sequence  $\xi_i$ of i.i.d vector-valued random variables which take values in the hypershpere $S:=S^d\subset \mathbb{R}^d$ centered at $0$ and of radius $1$, $S:=\{u\in \mathbb{R}^d; \ ||u||\le 1\},$   and with  common distribution  $\theta$, which we choose absolutely continuous with density $\rho\in L^{\infty}_{\L}$, namely $d\theta(\xi) =\rho(\xi) d\L(\xi)$, with $\int_{S} \rho(\xi)  d\L(\xi)=1.$\footnote{Each $\xi$ is a vector with $d$ components; all these components are independent and distributed with common density $\rho'$; the  product of such marginals $\rho'$'s gives  $\rho$.}\\
%This means in particular that, whenever $\xi\in \mathbb{T}^d$ with $d>1$, then its components %$\xi^{(l)}, \ l=1,\cdots, d$ are i.i.d. random variables with distribution %$d\theta^{(l)}(\xi^{(l)})=\rho^{(l)}(\xi^{(l)}) d\Leb(\xi^{(l)})$, where %$\int_{\mathbb{T}}\rho^{(l)}(\xi^{(l)})d\Leb(\xi^{(l)})=1$, %$d\theta(\xi)=d\theta^{(1)}(\xi^{(1)})\cdots d\theta^{(d)}(\xi^{(d)})$ and finally %$\rho(\xi)=\rho^{(1)}(\xi^{(1)})\cdots \rho^{(d)}(\xi^{(d)}).$\\
\begin{remark}
The paper \cite{22} contains examples of endomorphisms verifying the Assumption M, in particular one-dimensional Rychlik maps endowed with bounded variation functions and multidimensional piecewise uniformly expanding maps endowed with quasi-H\"older observables. In order to get the decay of correlations (\ref{DC}), one needs more regularity for $T$, usually $C^{1+\alpha}.$ Our next proofs crucially depend on the decay against $L^1_{\nu}$ functions \footnote{Actually, we will stress  in Section 3 that what is really needed is the $L^1_{\nu}$ property for characteristic functions which, for some systems,  is  easier to show.}; we believe that one could weaken such   assumption and   extend the theory to invertible maps, but that would need a different approach: in order to support this claim, Section 4 will contain numerical computations on examples which are not covered by our analytical results, but which show  similar behaviors. We will comment further on these  issues in   Section 3 - Consequence 3.
\end{remark}

The random orbits $T^ix+\epsilon \xi_i$ generates a new random process when an observable is computed along them. Suppose that $w$ is a measurable real function defined on $\Omega$;  we     take $w(x)=-\log(||x-z||)$, where $z$ is a given point in $\Omega$. The process: $$ X_0=w(x + \epsilon \xi_0), X_1=w(Tx + \epsilon \xi_1),...  X_m=w(T^mx+\epsilon \xi_m)$$
is endowed with the probability $\mathbb{P}=\nu \times \theta^{\mathbb{N}}$  defined on the product space $\Omega\times S^{\mathbb{N}}$ with the product $\sigma$-algebra; a point in this space is the couple $(x,\ \overline{\xi}:=\{\xi_0,\xi_1,\cdots,\xi_m,\cdots \})\in X\times S^{\mathbb{N}}.$\footnote{With this final notation the random process is better defined as $T^ix+\epsilon \Pi_i(\overline{\xi}),$ where $\Pi_i(\overline{\xi})$ projects onto the $i$th component $\xi_i.$}

\begin{remark}Before checking conditions  $D_2(u_m)$ and $D'(u_m),$  we notice that, contrarily to the random setting studied in \cite{22}, the random variable $X_0$ depends now not only on the initial condition $x\in \Omega$, but also on the random variable $\xi$ and this makes $\mathbb{P}$ stationary.
\end{remark}
With the given choice of the observable $w$,  the set $U_m$ is explicitly given by
$$
U_m=\{(x,\xi); \ ||(x+\epsilon \xi)-z||\le e^{-u_m}\}
$$
 For convenience, we will also set  $V_m:=B(z,e^{-u_m})$, the ball of center $z$ and radius $e^{-u_m}.$\\
\begin{proposition}\label{P1}

Let us suppose that our dynamical systems verifies the Assumption M and it is perturbed with observational noise satisfying the Assumption N. Then conditions $D_2(u_m)$ and $D'(u_m)$ hold for the observable $w$.
\end{proposition}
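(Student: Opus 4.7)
The plan is to reduce both conditions to applications of the decay of correlations \eqref{DC}, after integrating out the i.i.d.\ noise coordinates. Since each $X_i$ depends on $x$ and on $\xi_i$ only, the natural object to introduce is the noise-averaged indicator
\begin{equation*}
\phi_m(y):=\theta\{\xi\in S:\ y+\epsilon\xi\in V_m\}=\int_S \mathbf 1_{V_m}(y+\epsilon\xi)\,\rho(\xi)\,d\L(\xi),
\end{equation*}
a rescaled convolution of $\mathbf 1_{V_m}$ with $\rho$. Writing $f_m:=1-\phi_m$ and $H_m^{(\ell)}(y):=\prod_{j=0}^{\ell-1}f_m(T^j y)$, independence of $\xi_t,\ldots,\xi_{t+\ell-1}$ factorises the block expectation and produces
\begin{equation*}
\p\bigl(X_0>u_m,\;\max_{t\le j<t+\ell}X_j\le u_m\bigr)=\int\phi_m\cdot(H_m^{(\ell)}\!\circ T^t)\,d\nu,\qquad \p(M_\ell\le u_m)=\int H_m^{(\ell)}\,d\nu,
\end{equation*}
the latter by stationarity of $\p=\nu\times\theta^{\N}$ (which rests on $T_*\nu=\nu$ and the i.i.d.\ assumption). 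Applying \eqref{DC} with $h=\phi_m\in\B$ and $f=H_m^{(\ell)}\in L^{1}_\nu$ (whose norm is $\le 1$),
\begin{equation*}
\Bigl|\p\bigl(X_0>u_m,\,\max_{t\le j<t+\ell}X_j\le u_m\bigr)-\p(X_0>u_m)\,\p(M_\ell\le u_m)\Bigr|\le C\,\|\phi_m\|_\B\,t^{-2},
\end{equation*}
so choosing $\gamma(m,t):=C'/t^{2}$ and $t_m=\lfloor m^{3/4}\rfloor$ delivers $m\gamma(m,t_m)\to 0$ and establishes $D_2(u_m)$, provided $\sup_m\|\phi_m\|_{\B}<\infty$.

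For $D'(u_m)$ the same integration of $\xi_0$ and $\xi_j$ gives
\begin{equation*}
\p(X_0>u_m,X_j>u_m)=\int\phi_m\cdot(\phi_m\circ T^j)\,d\nu\le \p(X_0>u_m)^2+C\,\|\phi_m\|_\B\,\|\phi_m\|_1\,j^{-2},
\end{equation*}
by a second application of \eqref{DC}. Since $m\,\p(X_0>u_m)\to\tau$ by \eqref{eq:un} and $\sum_{j\ge 1}j^{-2}<\infty$, summing up to $\lfloor m/k_m\rfloor$ yields
\begin{equation*}
m\sum_{j=1}^{\lfloor m/k_m\rfloor}\p(X_0>u_m,X_j>u_m)\le\frac{\tau^2}{k_m}+C'\tau\,\|\phi_m\|_\B,
\end{equation*}
and this vanishes as $k_m\to\infty$ once $\|\phi_m\|_\B\to 0$.

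The delicate ingredient, and the step I expect to be the main obstacle, is showing that $\phi_m$ belongs to $\B$ with norm bounded and in fact vanishing as $m\to\infty$. A bare characteristic $\mathbf 1_{V_m}$ generally lies outside $\B$---whether $\B$ is the bounded-variation space used for Rychlik maps or the quasi-H\"older space used for multidimensional piecewise expanding maps---but convolution with the compactly supported, $L^{\infty}$ density $\rho$ both smooths $\mathbf 1_{V_m}$ and controls its height, since $\|\phi_m\|_{\infty}\le\|\rho\|_{\infty}\epsilon^{-d}\L(V_m)=O(e^{-du_m})=O(1/m)$. A convolution inequality for the $\B$-norm, with constants depending on $\epsilon$ and $\rho$ but not on $m$, should then yield $\|\phi_m\|_\B\le C(\epsilon,\rho)\,e^{-du_m}\to 0$, which simultaneously furnishes the uniform $\B$-bound required by $D_2(u_m)$ and the vanishing bound required by $D'(u_m)$. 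This is the observational-noise analogue of the mechanism exploited in Theorem~D of \cite{22}, whose argument I would follow as a template.
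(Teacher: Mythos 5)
Your treatment of $D_2(u_m)$ is essentially the paper's argument: integrate out the i.i.d.\ noise by Fubini, which replaces the indicators by the noise-averaged functions $\phi_m$ and $f_m$, and then apply the decay of correlations \eqref{DC} to $\int \phi_m\cdot (H_m^{(\ell)}\circ T^t)\,d\nu$. For that part a \emph{uniformly bounded} $\|\phi_m\|_{\B}$ suffices, and this does follow from the cheap convolution bound (e.g.\ $\mathrm{Var}(\mathbf 1_{V_m}*g)\le \mathrm{Var}(\mathbf 1_{V_m})\,\|g\|_{L^1}$ in the BV case), so your $\gamma(m,t)=C'/t^2$ and $t_m=\lfloor m^{3/4}\rfloor$ reproduce the paper's conclusion.

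The genuine gap is in $D'(u_m)$. By applying \eqref{DC} to \emph{every} lag $j\ge 1$ you lose the factor $\sum_{j\ge \alpha_m}j^{-2}\to 0$ that a cutoff would provide, and you are forced to require $\|\phi_m\|_{\B}\to 0$. You flag this yourself as the main obstacle, but it is not a removable technicality: for $j=1$ the correlation term $C\|\phi_m\|_{\B}\|\phi_m\|_1$ is not $o(1/m)$ unless the $\B$-norm decays, and under Assumption N it need not. The hypothesis is only $\rho\in L^{\infty}_{\L}$; the convolution inequality that would give $\|\phi_m\|_{\B}=O(\L(V_m))$ is the \emph{other} Young-type bound $\mathrm{Var}(\mathbf 1_{V_m}*g)\le\|\mathbf 1_{V_m}\|_{L^1}\mathrm{Var}(g)$, which requires $\rho$ itself to be of bounded variation (resp.\ quasi-H\"older in $d>1$). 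For a merely bounded density whose support oscillates at all scales, $\|\phi_m\|_{\B}$ can stay bounded away from zero. The paper avoids this entirely by splitting the sum at $\alpha_m=o(\log k_m)$: for $j\ge\alpha_m$ decay of correlations is enough because the tail of $\sum j^{-2}$ compensates a merely bounded $\B$-norm, while for $j<\alpha_m$ it abandons mixing and uses a geometric estimate on the approximated return time $R_{V_m,\xi,\xi'}$ --- the Lipschitz constant $\eta$ of $T$ shows that the set of noise pairs $(\xi,\xi')$ allowing $V_m$ to return to itself before time $\alpha_m$ has $\theta^2$-measure $O\bigl(\|\rho\|_{\infty}\|V_m\|^{d}\eps^{-d}\eta^{d\alpha_m}\bigr)$, and together with H\"older's inequality and $\nu\approx\L$ this makes the short-lag block $O(\tau^2\eta^{d\alpha_m}/k_m)\to 0$, using nothing beyond $\|\rho\|_{\infty}<\infty$. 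To complete your proof you must either import this short-return argument for $1\le j<\alpha_m$, or strengthen Assumption N so that $\|\phi_m\|_{\B}\to 0$ can actually be proved rather than hoped for.
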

\begin{proof}

We will give the proof when $T$ is continuous on the torus. The extension to the piecewise Lipschitz case is straightforward and it could be done as explained in the analogous extension   proofs of Propositions  4.2 and 4.5 in \cite{22} to which we defer for further details. We begin to check condition $D_2(u_m)$  by estimating the contribution given by the first term on the l.h.s of Eq.~\ref{D1}:

$$\p\left(X_0>u_m\cap
  \max\{X_{t},\ldots,X_{t+\ell-1}\leq u_m\}\right)= $$
  \begin{equation}\label{E1}\iint d\nu \ d \theta^{\mathbb{N}} {\bf 1}_{\{g(x+\epsilon\xi_0)>u_m\}}{\bf 1}_{\{g(T^tx+\epsilon\xi_t)\leq u_m\}}..{\bf 1}_{\{g(T^{t+l-1}x+\epsilon\xi_{t+l-1})\leq u_m\}},
  \end{equation}

the set of integration  variables here being $(x, \overline{\xi}^{(t+l)})$ with $\overline{\xi}^{(t+l)}=(\xi_0, \xi_1, \xi_{t+l-1})$. We apply Fubini's theorem and   factorize the integrals by exploiting the independence of the variables $\xi_l$, so that the previous expression becomes:

$$ \int d\nu \int {\bf 1}_{\{g(x+\epsilon\xi_0)>u_m\}} d\theta(\xi_0)  \prod_{i=1}^{l-1} \int {\bf 1}_{\{g(T^{t+i}x+\epsilon\xi_i)\leq u_m\}} d\theta(\xi_i). $$

Let us introduce the measurable functions :

$$H_m(l,x)= \prod_{i=1}^{l-1} \int {\bf 1}_{\{g(T^{i}x+\epsilon\xi_i)\leq u_m\}} d\theta(\xi_i) \ ;$$
$$G_m(x)=\int {\bf 1}_{\{g(x+\epsilon\xi_0)>u_m\}} d\theta(\xi_0) $$

with  $G_m(x) \in L^1_\nu$ and $H_m(l,x) \in \mathcal{B}.$ Then  Eq.~\ref{E1} can be rewritten as $\int d\nu \  G_m(x)H_m(l, T^tx)$.  By the decay of correlations assumption, we get

$$\left| \int d\nu \ G_m(x)\ H_m(T^t(x)) - \int d\nu \ G_m(x) \int d\nu \ H_m(l,x) \right|  $$
$$\leq C\ ||G_m||_{1}||H_m||_{\infty} \t^{-2}\le C \ t^{-2},$$

where $C$ is a constant depending only on $T$. In the previous equation, the second term on the l.h.s corresponds to $\p(X_0>u_m)  \p(M_{\ell}\leq u_m)$ which is the second term on the l.h.s. of the condition $D_2(u_m)$. Let us note, and this will be useful later, that $D_2(u_m)$ holds with $\gamma(m,t)=\gamma(t)=C^{*}t^{-2}$ for some $C^{*}>0$ and $t_m=m^{-\beta}$, with $1/2<\beta<1.$

In order to deal with Condition  $D'(u_m)$, we follow the same strategy as in \cite{22}. We begin
to define the {\em approximated first return time of the point} $x$ in $V_m$ in the following way: we fix the couple   $(\xi, \xi')\in S^2$ and we set
 $$r_{V_m, \xi,\xi'}(x):=\min\{j\ge1, \ T^jx+\epsilon \xi'\in V_m; x+\epsilon \xi\in V_m\}.$$  Notice that we keep fixed  the variables $\xi, \xi'$ while   iterating the point $x$. Moreover, instead of $x$, we require the initial condition $x+\eps\xi$ to be in $V_m$. Then we
 define the  {\em approximated first return time  of the set} $V_m$ into itself as:
  $$R_{V_m, \xi,\xi'}:=\min_{\{x, x+\epsilon \xi \in V_m\}} \{r_{V_m, \xi, \xi'}(x)\}.$$ We observe that
  $||T^j(V_m-\epsilon \xi)+\epsilon \xi'||\le \eta^j||V_m-\epsilon \xi||,$   where $\eta>1$ is the highest rate of separations for the  points. We use the symbol $||\cdot||$ indifferently to denote {\em distance} and {\em diameter}. The notation $z+B$, where $z\in M$ and $B$ is a subset of $M$,  stands for  the set $\cup_{w\in B}{\{z+w\}}$). We now fix some sequences $(\alpha_m)_{m\in \mathbb{N}}$  going to infinity and such that $\alpha_m=o(\log k_m)$, where $k_m$ is the sequence defined in (\ref{def:Un}). Therefore, whenever
$$||T^j(z-\epsilon \xi) + \epsilon \xi' - (z-\epsilon \xi)|| > 2\eta^j||V_m-\epsilon \xi||, $$  $\forall j=1,\cdots,\alpha_m$, then $$T^j(V_m-\epsilon \xi) + \epsilon \xi' \cap( V_m-\epsilon \xi) = \emptyset,$$  which in turn implies that $R_{V_m, \xi,\xi'}>\alpha_m$. Since
$$\{(\xi, \xi')\in S^2; R_{V_m, \xi,\xi'}\le \alpha_m\}\subset \cup_{j=1}^{\alpha_m}\{(\xi, \xi'\in S^2); ||T^j(z-\epsilon \xi) + \epsilon \xi' - (z-\epsilon \xi)||\le 2\eta^j||V_m-\epsilon \xi|| \},$$ we have $$\theta^2\{(\xi, \xi')\in S^2; R_{V_m}(\xi, \xi')\le \alpha_m\}\le
$$
$$\sum_{j=1}^{\alpha_m}\int d\xi \rho(\xi) \int d\xi' \rho(\xi')  {\bf 1}_{\{\xi', ||\xi'-\frac{(z-\epsilon \xi-T^j(z+\epsilon \xi))}{\epsilon}||\le \frac{2 \eta^j}{\epsilon}||V_m-\epsilon \xi||\}}\le $$
$$
 \mathcal{ O}\left(||\rho||_{\infty}\sum_{j=1}^{\alpha_m}2^d\ \eta^{jd}\ ||V_m||^d\epsilon^{-d}\right)\le \mathcal{O}\left(||g
||_{\infty}||V_m||^d\epsilon^{-d}\eta^{d\alpha_m}\right)
$$
where "$\mathcal{O}$" takes into account the multiplicative factor given by the volume $K_d$ of the unit hypersphere $S$.
We now have:
$$
m\sum_{j=1}^{[\frac{m}{k_m}]}(\nu\times\theta^{\mathbb{N}})\{(x,\overline{\xi}); X_0>u_m; \ X_j>u_m\}=
$$
$$
m\sum_{j=\alpha_m}^{[\frac{m}{k_m}]}\int d\nu \{\int d\theta(\xi) {\bf 1}_{\{g(x+\epsilon \xi)>u_m\}}\int d\theta(\xi') {\bf 1}_{\{g(T^jx+\epsilon \xi')>u_m\}}\}+
$$
$$
m\sum_{j=1}^{\alpha_m}\int \int {\bf 1}_{\{(\xi, \xi'); R_{V_m, \xi,\xi'}\le \alpha_m\}}d\theta(\xi)  d\theta(\xi') \cdot A(j,\xi,\xi')= I +II $$ where $A(j,\xi,\xi')=\left\{\int d\nu {\bf 1}_{\{x+\epsilon \xi\in V_m\}}\ {\bf 1}_{\{T^jx+\epsilon \xi'\in V_m\}}\right\}.
$

Again, the first term (I) on the l.h.s. can be estimated  by using decay of correlations applied to the (same) observable $\tilde{H}(x)=\int d\theta(\xi)  {\bf 1}_{\{g(x+\epsilon \xi)>u_m\}};$ we easily get:
$$
I \ \le m\sum_{j=\alpha_m}^{[\frac{m}{k_m}]} \{\mathbb{P}(U_m)^2+ C \mathbb{P}(U_m)j^{-2}\}
\le \frac{(n\mathbb{P}(U_m))^2}{k_m}+mC\mathbb{P}(U_m)\sum_{j=\alpha_m}^{[\frac{m}{k_m}]}j^{-2}=
{\mathcal O}(\frac{\tau^2}{k_m}+\tau \sum_{j=\alpha_m}^{[\frac{m}{k_m}]}j^{-2})\underset{m\to+\infty}{\longrightarrow} 0
$$
since $m\mathbb{P}(U_m)\rightarrow \tau.$ \\ For the second term (II) we use Holder's inequality and the fact that $\nu$ is equivalent  to $\L$  \footnote{We will use the symbol ``$\approx$'' to signify that equivalence, namely  there exists a positive constant $\iota$ such that for any measurable set $A$ we have  that $\iota^{-1}\L (A)\le \nu(A)\le \iota \L(A)$.} with essentially bounded density and $\L$ is translationally invariant:
%(by using again the facts that $\nu(V_n)\sim \tau/n$,  $\nu$ %is equivalent to $m$ with an essentially bounded density and %finally by (\ref{def:Un})):

$$
II\le {\mathcal O}\left(m\sum_{j=1}^{\alpha_m} \L(V_m)\theta^2\{(\xi, \xi')\in S^2; R_{V_m, \xi,\xi'}\le \alpha_m\}\right)
$$

$$
\le{\mathcal O}\left(m\sum_{j=1}^{\alpha_m} \mbox{\L}(V_m) ||\rho
||_{\infty}||V_m||^d\epsilon^{-d}\eta^{d\alpha_m}\right)
$$
%= \mathcal{O}(  \frac{n^2}{k_n} \mathbb{P}(U_n)^2  \ %||g|_{\infty} %\frac{\eta}{1-\eta}\eta^{\alpha_n}\epsilon^{-1})=$$
%$$
%\mathcal{O}(\frac{\eta^{\alpha_n}}{k_n})\underset{n\to+\infty}%{\longrightarrow} 0
%$$
%since $\alpha_n=o(\log k_n),$ and we used the fact that
Since $\mathbb{P}(U_m)\approx \L(V_m)$ and $||V_m|| \approx  \L(V_m)^{1/d}$ we finally  have
$$
II={\mathcal O}(\frac{m^2}{k_m} \ \mathbb{P}(U_m)^2 \  \eta^{d\alpha_m})={\mathcal O}\left(\tau^2 \ \frac{\eta^{d\alpha_m}}{k_m}\right),
$$
which goes to zero with the prescribed assumptions for $\alpha_m$ and $k_m.$\\
\end{proof}
\section{Generalizations and consequences}

In this section  we would like  to point out a few  interesting properties of the observational noise.  We start by an explicit calculation of the quantity $\tau$ defined in (\ref{eq:un}) for the observable  (\ref{g1}). The computation  is done in $d=1$, but the generalization to higher dimensions  is trivial. As we have anticipated in Section 1, the following proposition   requires only the existence of the Gumbel law for the process under investigation, which is of course true for   systems verifying Proposition \ref{P1}.
 \begin{proposition}\label{P2}
 Let us suppose that the one-dimensional dynamical systems $(\Omega, \mathcal{B}, \nu, T)$ verifies a Gumbel law for the process $X_m(x,\overline{\xi}) :=-\log(|T^mx+\Pi_m(\overline{\xi})-z|)$ endowed with the probability $\mathbb{P}=\nu\times \theta^{\mathbb{N}},$ and also that $\theta$ is the Lebesgue measure measure on $S$. Then the linear sequence $u_m:=u/a_m + b_m$, defined by \ref{eq:un}, verifies:
 $$
 a_m=1; \ b_m=\log\left(\frac{m \nu(B(z, \eps))}{\varepsilon}\right)
$$
 \end{proposition}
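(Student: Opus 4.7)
The plan is to compute $\p(X_0 > u_m)$ explicitly for small $e^{-u_m}$ and then invert the scaling relation $m\,\p(X_0 > u_m) \to \tau$ from (\ref{eq:un}). Since $X_0(x,\overline{\xi}) = -\log|x + \eps\xi_0 - z|$, the event $\{X_0 > u_m\}$ is exactly $\{(x,\xi_0) : x + \eps\xi_0 \in V_m\}$ with $V_m := B(z, e^{-u_m})$. Integrating in $\xi_0$ first, Fubini gives
\[
\p(X_0 > u_m) = \int d\nu(x)\, \theta\bigl(\{\xi \in S : \eps\xi \in V_m - x\}\bigr).
\]
Using that $\theta$ is the (normalized) Lebesgue measure on $S = [-1,1]$, the inner mass is geometrically transparent: if $x \in B(z, \eps - e^{-u_m})$ the translate $V_m - x$ sits entirely inside $[-\eps,\eps]$, so the admissible $\xi$-set is $(V_m - x)/\eps$ and carries $\theta$-mass equal to $e^{-u_m}/\eps$; if $x \notin B(z, \eps + e^{-u_m})$ the mass vanishes; and on the boundary strip of width $2e^{-u_m}$ around $\partial B(z,\eps)$ the mass lies between $0$ and $e^{-u_m}/\eps$.

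I would then split the $x$-integral into these three regions and bound the strip contribution using Assumption M, which guarantees that $\nu$ has bounded density with respect to $\L$; this makes the $\nu$-mass of the strip $O(e^{-u_m})$, so the strip contributes at most $O(e^{-2u_m})$ to $\p(X_0 > u_m)$. The exterior contributes zero, and the interior contributes $(e^{-u_m}/\eps)\,\nu(B(z,\eps - e^{-u_m}))$, which by the same density bound equals $(e^{-u_m}/\eps)\,\nu(B(z,\eps)) + O(e^{-2u_m})$. Putting this together yields the key asymptotic
\[
\p(X_0 > u_m) = \frac{e^{-u_m}}{\eps}\,\nu(B(z,\eps)) + O(e^{-2u_m}).
\]
Imposing $m\,\p(X_0 > u_m) \to \tau$ then forces $e^{-u_m} \sim \tau\eps/(m\,\nu(B(z,\eps)))$, equivalently $u_m = \log(m\,\nu(B(z,\eps))/\eps) - \log\tau + o(1)$. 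Since the Gumbel parametrization identifies $\tau$ with $e^{-u}$, matching with the affine form $u_m = u/a_m + b_m$ delivers $a_m = 1$ and $b_m = \log(m\,\nu(B(z,\eps))/\eps)$, as claimed.

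The one substantive step is the boundary-strip estimate: without the $L^{\infty}_{\L}$ density assumption in Assumption M, the shell of width $2e^{-u_m}$ around $\partial B(z,\eps)$ could carry disproportionately large $\nu$-mass and spoil the leading asymptotics. With that hypothesis in hand the remainder is safely $O(e^{-2u_m}) = o(1/m)$, and everything else is elementary algebra together with the geometry of how the small interval $V_m$ intersects the $\eps$-translates $x + \eps S$. The same argument transports verbatim to dimension $d$ by replacing intervals with balls and tracking the volume factor $K_d$.
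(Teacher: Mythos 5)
Your computation is correct and isolates the same geometric fact as the paper --- namely that for Lebesgue $\theta$ the inner integral $\theta\bigl((V_m-x)/\eps\bigr)$ equals $e^{-u_m}/\eps$ for $x$ well inside $B(z,\eps)$ and vanishes well outside --- but the two arguments are organized differently, and yours is strictly less general than the statement requires. The paper does not expand $\p(X_0>u_m)$ with explicit error terms: it substitutes the candidate $u_m=-\log\bigl(\eps\tau/(m\,\nu(B(z,\eps)))\bigr)$ directly, observes the uniform bound $m\,\theta\bigl(B(\tfrac{z-x}{\eps},\tfrac{e^{-u_m}}{\eps})\cap B(0,1)\bigr)\le m e^{-u_m}/\eps=\tau/\nu(B(z,\eps))$ (using only translation invariance of Lebesgue $\theta$), and passes to the limit by dominated convergence, the pointwise limit being $\mathbf{1}_{B(z,\eps)}(x)\,\tau/\nu(B(z,\eps))$. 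That route needs nothing from $\nu$ beyond $\nu(\partial B(z,\eps))=0$. Your three-region decomposition instead controls the boundary shell by invoking the $L^{\infty}_{\L}$ density of $\nu$ from Assumption M. But that hypothesis is not part of Proposition \ref{P2}, whose whole point (see Consequence 3 and Section 4.3) is to apply to singular invariant measures --- the Cantor IFS and the Lozi attractor --- where $\nu(\text{shell})=O(e^{-u_m})$ is simply false. The restriction is easy to remove: the shell contributes at most $(e^{-u_m}/\eps)\,\nu(\text{shell}_m)$, and $\nu(\text{shell}_m)\to\nu(\partial B(z,\eps))=0$ by continuity from above (an assumption both proofs need implicitly, since for $x$ on the sphere the limiting $\theta$-mass is only half of $e^{-u_m}/\eps$), so after $m$ times it the shell term is $o(1)$ with no density bound at all. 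As written, though, your proof only establishes the proposition for $\nu$ absolutely continuous with bounded density. What your version buys in exchange is a quantitative remainder $O(e^{-2u_m})=O(1/m^2)$ rather than a bare $o(1/m)$, which could be useful if one wanted rates; the paper's dominated-convergence version buys the generality that the applications actually use.
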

 \begin{proof}
 We begin to observe that
 $$
m\ \mathbb{P}(X_0>u_m)= m \ \int d\nu(x) \ \left( \int d\theta(\xi)  \ {\bf 1}_{\{\xi; \ ||\xi-\frac{(z-x)}{\varepsilon}||<\frac{e^{-u_m}}{\varepsilon}\}}\right)=
$$
$$
m \ \int  d\nu(x) \theta \left(B\left(\frac{z-x}{\varepsilon}, \ \frac{e^{-u_m}}{\varepsilon}\right)\cap B(0,1)\right)
$$
since  the variable $\xi$ must stay in the ball of center $0$ and radius $1$.\\
Let us now introduce  $$ u_m:= -\log (\frac{ \varepsilon \ \tau}{m \nu(B(z, \eps))})=u/a_m+b_m,$$ with $u:= -\log \tau; \ a_m:=1; \ b_m:=\log(\frac{m \nu(B(z, \eps))}{\varepsilon})$;   and observe that
$$
m \ \theta \left(B\left(\frac{z-x}{\varepsilon}, \ \frac{e^{-u_m}}{\varepsilon}\right)\cap B(0,1)\right)\le m \frac{e^{-u_m}}{\varepsilon}\le \frac{m}{\varepsilon}\frac{ \varepsilon \ \tau}{m \nu(B(z, \eps))}.
$$
This bound is independent from $m$ and integrable\footnote{Here we use crucially the fact that $\theta$ is {\em exactly} Lebesgue, since its translational invariance property allows us to get rid of the variable $x$ in the center of the ball.}. \\
We can apply the theorem  of   dominated convergence since
$$
 \lim_{m\rightarrow \infty} m \ \theta \left(B\left(\frac{z-x}{\varepsilon}, \ \frac{e^{-u_m}}{\varepsilon}\right)\cap B(0,1)\right)= {\bf 1}_{B(z,\varepsilon)}(x) \frac{ \ \tau}{ \nu(B(z, \eps))}
$$
Having passed the limit inside, the integral finally gives $\tau$:
$$
m\ \mathbb{P}(X_0> u +b_m)\rightarrow \tau
$$
and therefore
$$
\mathbb{P}(M_m\le u+b_m)\rightarrow \exp(-e^{-u})
$$
which is exactly the {\em Gumbel law}.\\Whenever $d>1$  a similar computation  immediately gives  that the linear sequence $u_m:=u/a_m + b_m$ verifies:
 $$
 a_m=d; \ b_m=\frac1d\ \log \left(\frac{K_d\ m \nu(B(z, \eps))}{\varepsilon^d}\right).$$
\end{proof}

The following useful consequences will be exploited in the next section:
\begin{itemize}
\item {\em Consequence 1}\\
The scaling parameter $b_m$ depends on the target point $z$   via  the local density of the invariant measure in a ball of radius   given by the    $\eps$. Let us start by considering the case of absolutely continuous  invariant measures.  If the point $z$ is visited with less frequency, the local density will be of lower order in  $\eps$, which means that one should go to higher values of $m$ in order  to have a reliable statistics.  This is the case, for instance,  for the points $\pm1$ for the map introduced by Hemmer  \cite{Hemmer}:

\begin{equation}
T(x)=1-2\sqrt{|x|}
\label{hemmereq}
\end{equation}

and defined on the interval $[-1, 1]$ . The invariant density $\rho$ can be computed directly
 by inspection and reads: $\rho(x)=\frac12(1-x)$.
 %In $x=-1$  the density of the absolutely continuous invariant measure $\nu$ % behaves like $x$ .
  Therefore $\nu(B(-1, \varepsilon))\approx \varepsilon^{2}$ and $b_m\approx \ \log(m \varepsilon).$  \\  A complementary issue will appear whenever the map exhibits a laminar behavior in some regions of the phase space and therefore it will  spend there a lot of time. This happens, for instance, for  the well-known map of Pomeau-Manneville \cite{PM}, which could be written as:

\begin{equation}
\begin{cases}
T_1(x)= x + 2^{\alpha}x^{1+\alpha}, \ 0\le x \le 1/2\\
T_2(x)=  T(x)= 2x-1, \ 1/2 \le x \le 1\\
\end{cases}
\label{PM}
\end{equation}

 The origin $0$ is a neutral fixed point and, for $0<\alpha<1$, the density of the absolutely continuous invariant measure $\nu$ behaves like $x^{-\alpha}$ in the neighborhood of $0$ \cite{LSV}. Therefore, $\nu(B(0, \varepsilon))\approx \varepsilon^{1-\alpha}$ and $b_m\approx \ \log(m \varepsilon^{-\alpha}).$  We will analyze in the next section which finite size effects arise for these two examples.\\

 {\em Warning}: strictly speaking the previous two maps do not fit with the assumptions of Proposition \ref{P1}. In fact, in both cases it is not possible to prove a polynomial decay of correlations against {\em all} $L^1_{\nu}$  functions as it was shown in \cite{AFLV}. On the other hand, an inspection of the proof shows that  we only require the $L^1_{\nu}$ property for characteristic functions, so that, in principle, such a decay could be obtained. This was achieved, for instance, in the case of rotations by using a Fourier series technique \cite{31a}. An additional problem concerns the rate of decay. For the Pomeau-Manneville map it is of order $n^{-\frac{1}{\alpha}+1}$  \cite{LSV}, so that it fits with our assumption whenever $\alpha< 1/3$; for the Hemmer's map the situation is  worst since the correlations decay as $\frac1n$  \cite{CHMV}. Nevertheless, we conjecture that Proposition \ref{P1} could be applied to the latter case as well, as we will argue in {\em Consequence 3}, and  as we will show numerically in the next section.

\item {\em Consequence 2}\\Let us observe that, if we define the linear scaling factor as above, we have convergence towards the Gumbel law $e^{-e^{-u}}$ for {\em any} $z$. This shows that the extremal index  defined and studied  in \cite{15,22} is $1$ everywhere implying that there are no points which behave like unstable fixed points of deterministic dynamical systems.
\item {\em Consequence 3}\\
Although we were able to prove Proposition \ref{P1} whenever  the invariant measure is equivalent to Lebesgue, we pointed out that Proposition \ref{P2} is true for any invariant  measure, providing that one can show the existence of an EVL.  Let us therefore suppose that a given dynamical system with invariant measure $\mu$, not necessarily equivalent to Lebesgue, admits an extreme value law for the observable $w$ under the  observational noise. Then we can apply Proposition \ref{P2}, which only requires that $\theta$ is Lebesgue. Remember that in the expression for $b_m$ we are considering a fixed positive size for the noise $\eps$. Let us suppose that at this scale we have
 $\nu(B(z, \eps))\approx \eps^D$, where $D$ is usually an estimation of the geometric and fractal properties of $\nu$ at the point $z$ \footnote{We give an example:
a measure $\nu$ on $\R^+$ is called {\em Ahlfohrs upper semi-regular} if there is a constant $C>0$ and a real number $\alpha>0$ such that for all non-empty open intervals $I\subset{\R^+}$
$
\frac{\nu (I)}{(\mbox{diam}(I))^\alpha}<C.
$\\
Ahlfors upper semi-regular measures include fractal measures like the measures of maximal dimension of dynamically defined Cantor sets, i.e. Cantor sets that arise from smooth expanding repellors. More generally, given an invariant measure $\mu$ and whenever the limit $\lim_{r\rightarrow 0^+}\frac{\log\mu(B(x,r))}{\log r}$ exists $x$-$\mu$-almost everywhere, then this limit equals the {\em Hausdorff dimension of the measure} $\mu$, $HD(\mu):=\inf \{\mbox{Hausdorff dimension of}\ Y, \mu(Y)=1\}$ \cite{YY}, also called {\em information dimension}.  In some cases $HD(\mu)$ is given by  suitable relations between Lyapunov exponents and entropies, formulae better known as {\em Kaplan-Yorke} and {\em Ledrappier-Young}: see \cite{RE} for a detailed exposition of these issues. }. In this case, if the ambient space has dimension $d$,  the linear scaling parameter $b_m$ has the form:
\begin{equation}
b_m \sim \frac1d \ \log(m \eps^{D-d}).
\label{superformula}
\end{equation}
Therefore, we have  an useful technique to detect the local dimensions of the measure, with a {\em finite} resolution given by the strength of the noise. This will also allow us to compute  directly the distribution of the maxima with the linearization, given the explicit expression of the $u_m$. This would be particularly useful whenever the invariant measure is singular and therefore the GEV distribution does not admit a probability density function: see Section 3.1 in \cite{30} for a detailed discussion on this point.
\item {\em Consequence 4}\\ As we said in the Introduction, another interesting property of the observational noise is its direct relationship with the statistics of first hitting times in small sets.
We first define the {\em first hitting time of the set $V_m$} in a slightly different manner which respect to the quantity $r_{V_m}$ introduced above. Given the triple $(V_m, \ x\in M, \ \overline{\xi}\in S^{\mathbb N})$, we set:
$$
{\mathcal R}_{V_m}(x,\overline{\xi}):=\min\{j\ge 1, \ T^jx+\epsilon \xi_j\in V_m\}
$$
Let us notice that, contrarily to $r_{V_m}$, the error increment changes at each step since we are now dealing with a {\em true} random orbit; this easily implies that

$$
\mathbb{P}(M_m\le u_m)=\ \mathbb{P}({\mathcal R}_{V_m}>m).
$$

By using the expression of $u_m$ found above and setting consequently $V_m=B(z; \frac{\varepsilon \tau}{m \nu(B(z, \eps))})$, in dimension $1$,  we could rewrite the previous formula as, for $t\in \mathbb{R}:$
\begin{equation}
\mathbb{P} \left(\frac{t}{m}{\mathcal R}_{B(z; \frac{t}{m})}>t\right)\rightarrow e^{-\frac{t \nu(B(z, \eps))}{\varepsilon}}
\label{hts}
\end{equation}

which shows that the first hitting time follows an exponential law tempered by the strength noise $\varepsilon.$

\end{itemize}

\section{Discussion and numerical results}

In this section we discuss some important implications connected to the introduction of observational noise in finite time series. In particular, through numerical experiments devised on low dimensional maps, we show that the influence of truncations and observational noise is related to the intensity of the perturbation applied. Some of the theoretical findings presented in the previous sections present a practical interest in a wide range of applications namely the analysis of the role of truncation errors for instrument with low accuracy,  the statistics of points visited sporadically in the analysis of recurrence of time series and   the possibility of computing attractor dimension by using Eq. \ref{superformula} as an alternative way to other techniques. For each maps, we present and comment numerical experiments and outline possible further applications.\\ Before introducing such examples, let us make a useful comment. A close
 inspection to the proof of Proposition \ref{P1}, shows that the parameter $\eps$ appears in the denominator of one factor in the r.h.s. of the term (II) at the end of the proof. This means that the convergence gets better when $\eps$ is large, which is not surprising since a large value of the perturbation implies a more stochastic independence of the process. On the other hand, if we want to use the form of the linear scaling parameter $b_m$ to catch the local properties of the invariant measure $\nu$, we need small values of $\epsilon$. A judicious  balance of the value of $\eps$ between these two regimes is therefore necessary when we pursue such numerical analysis.

\subsection{Truncation}
We  perturb a ternary shift  map with truncation error on the different digits $q$:

$$\varphi(i)=\operatorname{trunc}( 3x_i\ \mbox{mod}\ 1 , q)$$

The experiment consists in producing 30 orbits starting from different initial conditions taken  on the support of the  truncated measure. The length of the orbit is fixed according to the results presented in \cite{21} to be  such that $n=1000$. In order to analyze the effect of varying the bin length combined to the order of the truncation, we consider three different values of $m$. Each series of $w$  is therefore divided in $n$ bins and in each of them the maxima $M_j$ are extracted and then fitted to the GEV model via the L-moments procedure described in \cite{31a}. Note also that  the points $z$ are chosen after applying the truncation.   The results are shown in fig \ref{trunc} where the behavior of  the shape parameter $\kappa$ is  compared to the asymptotic Gumbel law $\kappa=0$.  In the deterministic limit $q\gg1$ the usual Gumbel EVL is recovered, whereas for $q\to 1$ the discretization of the invariant measure becomes relevant and the asymptotic EVL appears as a collection of Dirac deltas thus producing a divergence of the shape parameter from $0$. This is exactly what is visible in Fig \ref{trunc}: the convergence gets worse when $q<6$ and at $q=3$ we are already unable to fit the GEV distribution for all the $z$ points considered, although by increasing the bin length $m$ the convergence improves as  one would expect. These results, as we tested, are reproducible in other maps and gives a more general indication that computing asymptotic properties on truncated series   leads to estimation errors and divergence even at high order of truncation. \\

\subsection{Highly recurrent and sporadic points}

With the  introduction of the  observational noise, the scaling parameter $b_m$  depends on the target point $z$ {\em via} the local density of the invariant measure in a ball whose radius is given by the  error $\eps$.   In {\em Consequence 1}  we said that if the point $z$ is visited with less frequency, the local density is of lower order with respect to $\eps$, which means that one should go to higher values of $m$ in order  to have a reliable statistics. Here we want to test  that the order of $m$ needed to get convergence to the asymptotic $b_m$ is lower for a highly recurrent point then for a sporadic one. As  highly recurrent point  we choose $z=0$ for the Pomeau-Manneville map in Eq. \ref{PM} and as sporadic one the point $1$ of the map introduced by Hemmer and reported in Eq. \ref{hemmereq}. The experiment consists in computing $30$ realizations of the maps perturbed with  observational noise. Again, we  fit the maxima of the observable $w$ to the GEV distribution  by using the $L$-moments procedure and compare the values of $b_m$ obtained experimentally to the theoretical ones stated in Proposition \ref{P2}.  We report here the results for three different bin lengths $m=1000,10000,30000$  in Fig. \ref{PMfig}  for the Pomeau Manneville map and in Fig. \ref{Hemmerfig} for Hemmer map. The figures show how $b_m$ varies as a function of the noise $\epsilon=10^{-p}$, in terms of $p$.   In both  cases we observe convergence towards the theoretical values (solid lines) for high values of $\epsilon$ (low orders in $p$) whereas  in the limit of weak noise  one must increase the bin lengths to get convergence. The main result to be highlighted here is the better convergence of  highly recurrent points with respect to the ones visited  sporadically. This important property can be used to study time series recurrences  and  identify extremes as the points visited rarely for which the convergence towards the asymptotic parameters is bad.   The main advantage of studying recurrence properties in this way over applying other techniques  is due to the built-in test of convergence of this method: even for a point rarely recurrent there will be a time scale $m$ such that the fit converges. For smaller $m$, we can therefore  such a $z$ as a \textit{sporadically recurrent} point of the orbit as explained in \cite{31b}.  There  we show how to use this property to define rigorous recurrences in long temperature records collected at several weather stations. The convergence or divergence of the fit  allows us  for discriminating between temperatures belonging to the normal variability associated to the time scales defined by the bin length (e.g. the seasonal cycle)  or as anormal temperature if there are no or few recurrences in $m$.

\subsection{Attractor dimensions}

As we have already said in {\em Consequence 3},  if the invariant measure is not absolutely continuous, one could still perform the previous analysis, but both the ambient space dimension $d$ and the local dimension $D$ will enter in the computation of $b_m$ via Eq. \ref{superformula}.  This formula can be used in principle to test whether a map has a fractal support by comparing the  local dimensions with the  the ambient space dimension: it is enough to check how the obtained $b_m$ depend on the intensity of the noise $\epsilon=10^{-p}$. We should point out that, as we said in the footnote 1,  we are targeting the Hausdorff dimension of the measure. We test this idea  on the classical Iterated Function System $\{ T_1, T_2 \}$ used to produce  a Cantor set:
\begin{equation}
\begin{cases}
T_1(x)= x/3 \mbox{ with weight } q_1\\
T_2(x)=(x +2)/3 \mbox{ with weight } q_2\\
\end{cases}
\label{IFS_C}
\end{equation}
where $x \in [0,1]$, and we set $q_1=q_2=1/2$. Therefore, at each time step, we have the same probability to iterate $T_1(x)$ or $T_2(x)$; the {\em balanced} invariant measure associated to the map and the way to construct the orbit to detect the maxima are described in \cite{30}.  The results for different $m$ are reported in Fig. \ref{BM} for an  average among $30$ different realizations and three different bin lengths $n=1000,\ m=300,1000,3000$. Experimental data follow the prediction of Eq~\ref{superformula}  with $D=\log(2)/\log(3)$  as   Hausdorff dimension and $d=1$ as  ambient  space dimension but only  up to a certain noise intensity $p$ beyond which they reach a {\em plateau}. A justification for this behavior is that when the noise intensity is very small, the system needs longer trajectories - higher $m$ - to explore the ball of radius $\epsilon$. This gives an implicit criterion for the selection of the bin length $m$ needed to observe reliable results and it says that one should be careful in applications where the intensity of the observational noise is small compared to the scale of the dynamics. This analysis is confirmed by the results obtained for the Lozi map:
\begin{equation}
\begin{array}{lcl}
x^{(1)}_{t+1}&=&x^{(2)}_t +1 -a |x^{(1)}_t|\\
x^{(2)}_{t+1}&=&b x^{(1)}_t\\
\end{array}
\label{lozi}
\end{equation}
for which we consider the classical set of parameter   $a=1.7$ and $b=0.5$. Young \cite{young1985bowen} proved the existence of the SRB measure for the Lozi map and found the value $D=1.40419$ for the Hausdorff  dimension of the measure     by computing the Lyapunov exponents  and using a Kaplan-Yorke like formula.   The experiments is exactly the same described for the Cantor set and the results are presented in Fig.~\ref{lozifig} for the values of  $m=1000,3000,30000$. Agreement with the theoretical behavior of $b_m$, as expected from Eq. \ref{superformula}   represented in Fig.~\ref{lozifig} by the solid straight lines, is found for small   values of $p$ which means for large $\epsilon$.  For the Lozi map the convergence is worse then for the Cantor case.  This phenomenon has been already observed in \cite{30} and, up to now, there is not a clear explanation for it. However, by scanning numerically the ($m,p$) space, one can infer  the intervals of such parameters such that the $b_m$  converge towards the asymptotic results:  for example, when $m=30000$, an order of noise intensity $p\leq 3$  is needed to get convergence to the predicted theoretical values. Since there is only a limited range of $p$ such that the $b_m$  convergence to the prediction of Eq. \ref{superformula},  one has to take extremely care on using the results obtained with this method  to estimate   fractal dimensions. A good strategy to overcome this problem is to discard the value of $b_m$ which show no dependence on $p$ and check that the remaining points are sufficient  to perform a linear fit of $b_m$ vs $p$.\\

The possibility of computing fractal dimensions by using   a random  perturbation of a  dynamical system is an interesting  fact: when random perturbations are applied, one usually expects the orbit to explore asymptotically the ambient space and this normally hide the fractal property of the measure. Instead, as we said at the beginning of this section,  for  reasonable choices of $m$ and $p$, one gets   accurate information on  the fractal dimension by fitting the $D$ parameter of  Eq. \ref{superformula}. This finding opens new questions, for instance if it is possible to get analogous formulas in the case of random transformations. An application can be to adapt this theory  for  multi-scale systems whose description is usually made through an approximation of a dynamic consisting of a deterministic and a stochastic components. For example, by tuning the strength of the stochastic components,  one can study how the noise affects the structure of the deterministic dynamics; these aspects will be explored in forthcoming works.
%% Enter the largest bibliography number in the facing curly brackets
%% following \begin{thebibliography}

\medskip\noindent\textbf{Acknowledgements}.
SV was supported by the ANR-
Project {\em Perturbations}, by
the PICS ( Projet International de Coop\'eration Scientifique), Propri\'et\'es statistiques des sys\`emes dynamiques det\'erministes et al\'eatoires, with the University of Houston, n. PICS05968 and by the projet MODE TER COM supported by {\em R\'egion PACA, France}.  SV thanks support from  FCT (Portugal) project PTDC/MAT/120346/2010. The authors aknowledge Jean-Ren\'e Chazottes,  Jorge Milhazes Freitas and Paul Manneville for useful discussions. DF and SV acknowledge the
Newton Institute in Cambridge where this work was completed during
the program {\em Mathematics for the Fluid Earth}.

\begin{figure}[ht]
\centering
\includegraphics[width=120mm]{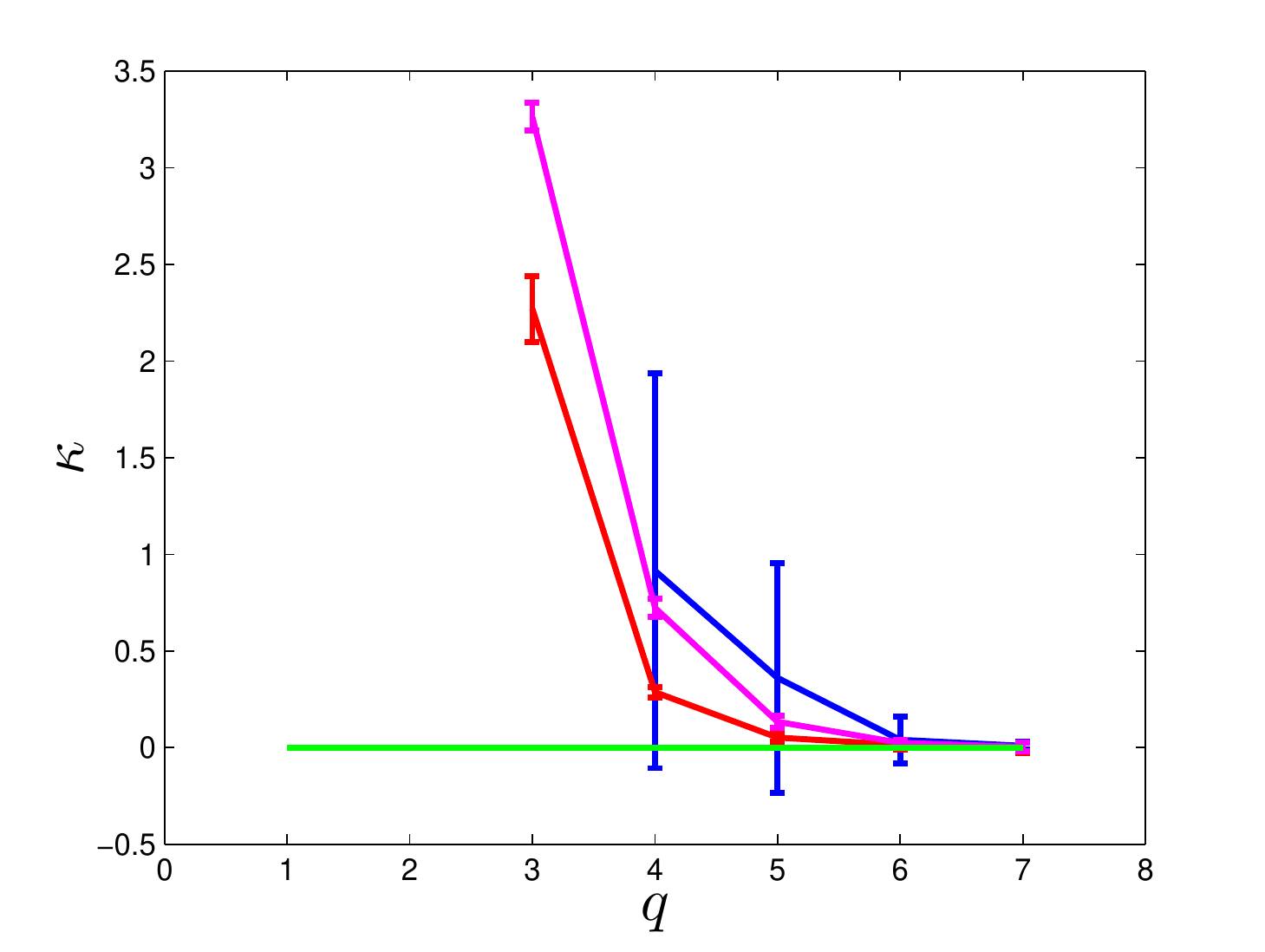}
\caption{Shape parameter $\kappa$ vs $q$, the digit where the truncation has been applied. Error-bars display the average of $\kappa$ over 30 realizations and the standard deviation of the samples for  $m$=300 (blue), $m$=1000 (magenta) and $m$=3000 (red). The green line correspond to the Gumbel law $(\kappa=0)$. $ n=1000$ for all the cases considered. }
\label{trunc}
\end{figure}

\begin{figure}[ht]
\centering
\includegraphics[width=100mm]{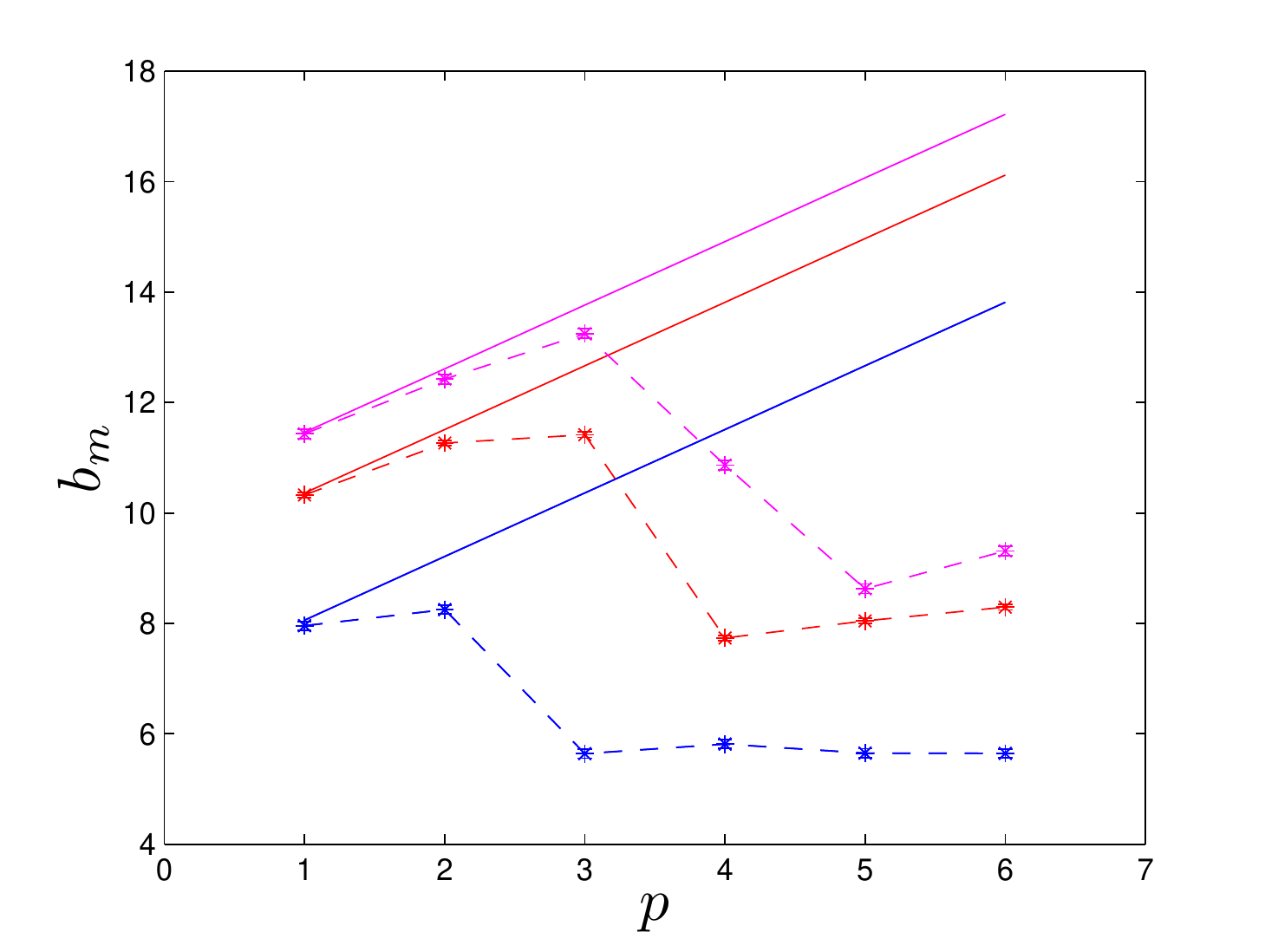}
\caption{Normalizing sequence $b_m$ vs intensity of the noise in terms of $p$ (we recall that $\epsilon=10^{-p}$) for the Pomeau Manneville map (Eq. \ref{PM}). We recall that Dashed error-bars display the average of $b_m$ over 30 realizations and the standard deviation of the sample. Solid lines the theoretical values. the blue, red and magenta curves respectively refers to $m=1000,10000,30000$, $z=0$. $ n=1000$ for all the cases considered.}
\label{PMfig}
\end{figure}

\begin{figure}[ht]
\centering
\includegraphics[width=100mm]{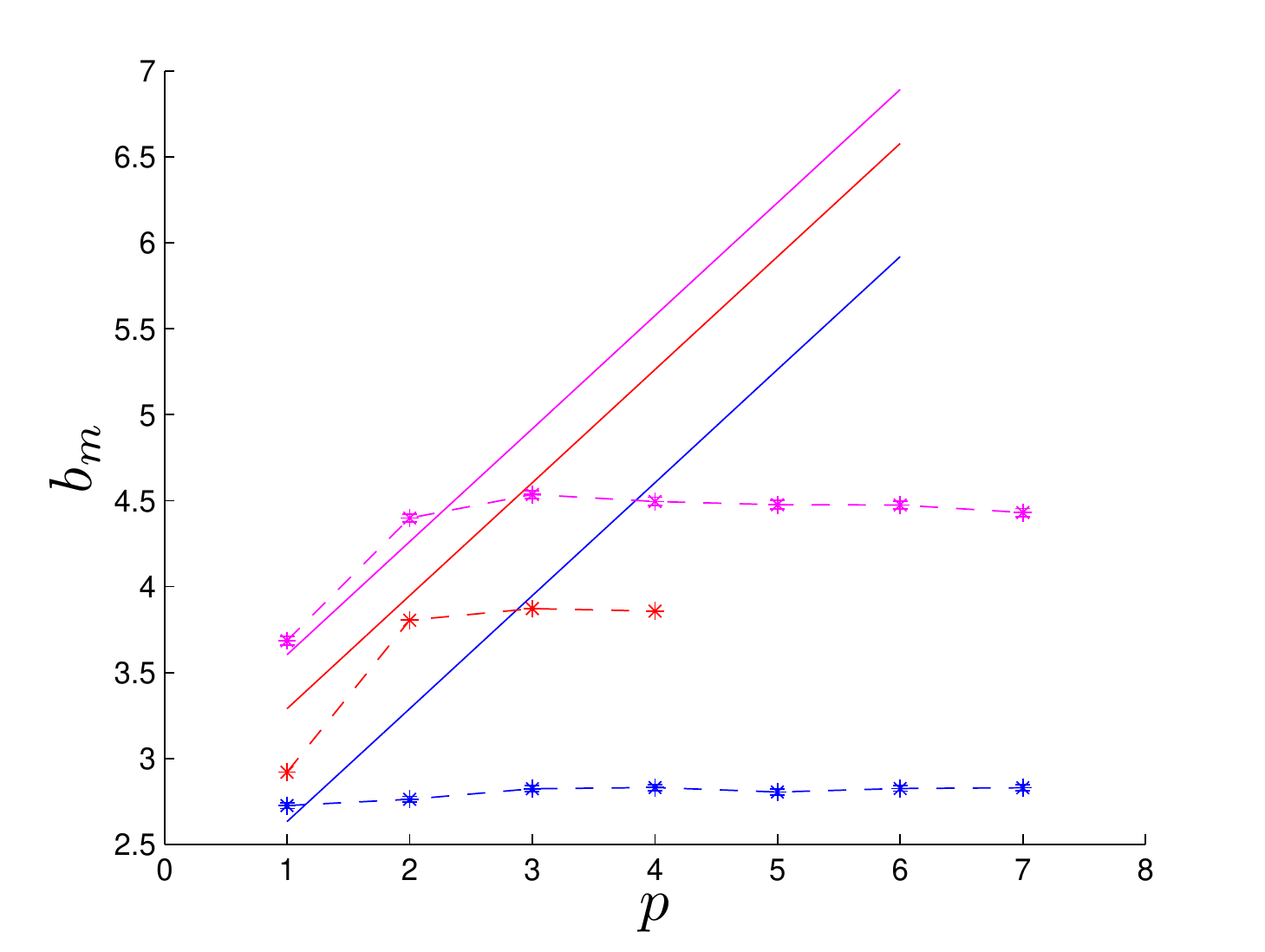}
\caption{Normalizing sequence $b_m$ vs intensity of the noise in terms of $p$ (we recall that $\epsilon=10^{-p}$) for the Hemmer map (Eq. \ref{hemmereq}). Dashed error-bars display the average of $b_m$ over 30 realizations and the standard deviation of the sample. Solid lines the theoretical values. the blue, red and magenta curves respectively refers to $m=1000,10000,30000$, $z=1$. $ n=1000$ for all the cases considered.}
\label{Hemmerfig}
\end{figure}

\begin{figure}[ht]
\centering
\includegraphics[width=100mm]{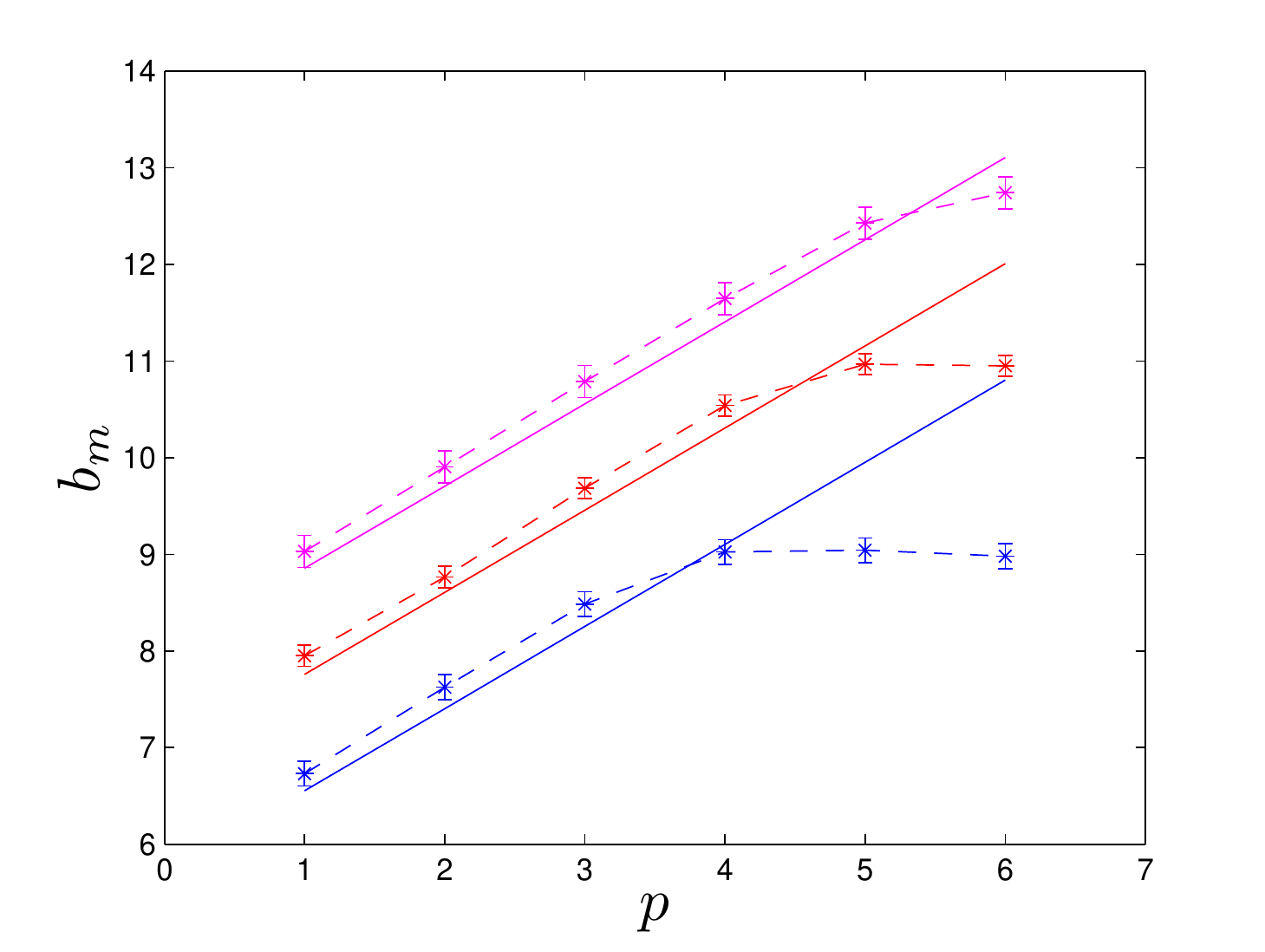}
\caption{Normalizing sequence $b_m$ vs intensity of the noise in terms of $p$  (we recall that $\epsilon=10^{-p}$) for the Cantor IFS (Eq. \ref{IFS_C}). Dashed errorbars display the average of $b_m$ over 30 realizations and the standard deviation of the sample. Solid lines the theoretical values. the blue, red and magenta curves respectively refers to $m=1000,10000,30000$, $z$s randomly chosen on the attractor. $ n=1000$ for all the cases considered.}
\label{BM}
\end{figure}

\begin{figure}[ht]
\centering
\includegraphics[width=100mm]{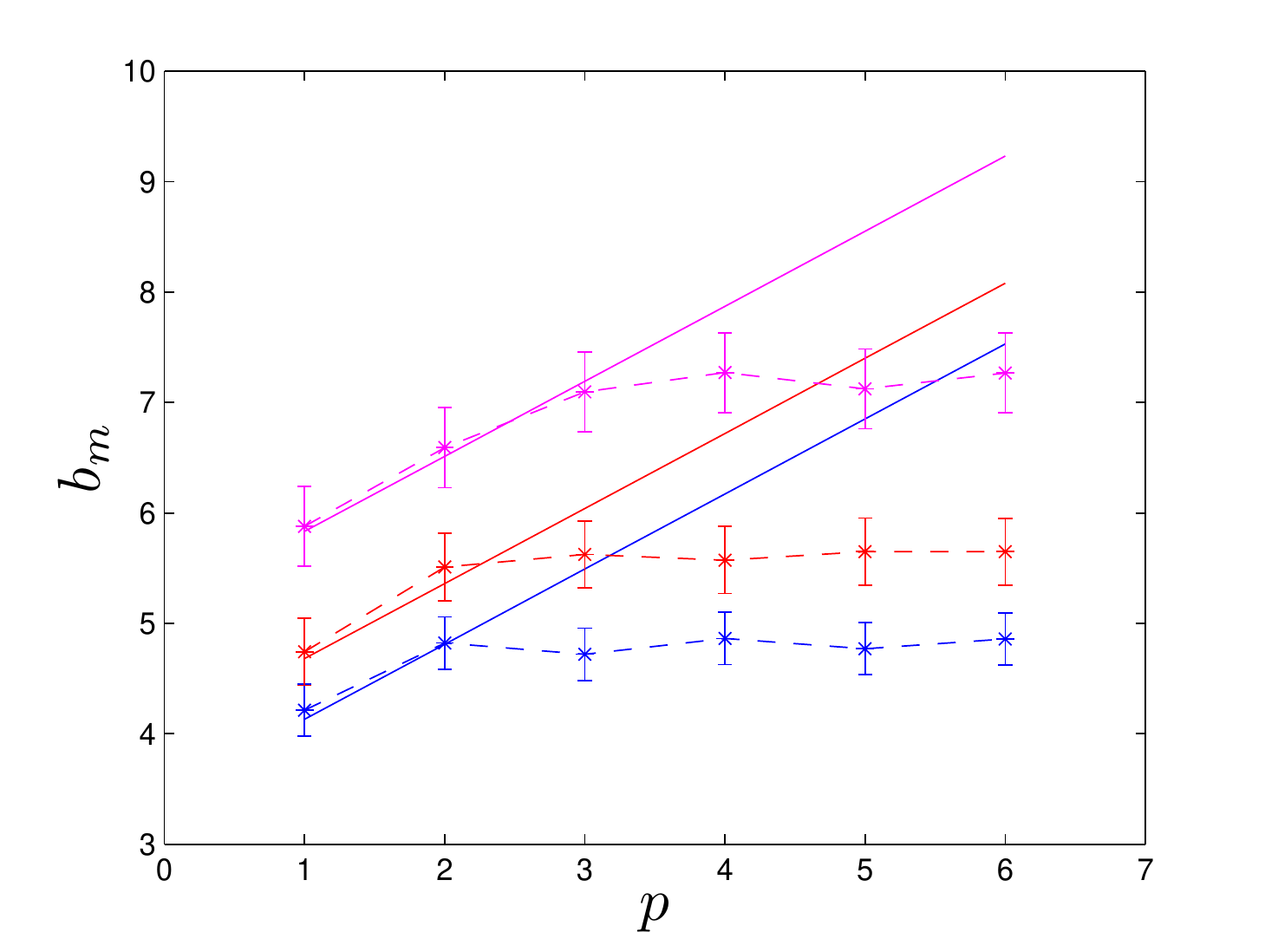}
\caption{Normalizing sequence $b_m$ vs intensity of the noise in terms of $p$  (we recall that $\epsilon=10^{-p}$) for the Lozi map  (Eq. \ref{lozi}). Dashed errorbars display the average of $b_m$ over 30 realizations and the standard deviation of the sample. Solid lines the theoretical values. the blue, red and magenta curves respectively refers to $m=1000,10000,30000$. The points $z$ are randomly chosen on the attractor. $ n=1000$ for all the cases considered}
\label{lozifig}
\end{figure}

\end{document}